\newtheorem{thm}{Theorem}[section]
\newtheorem{prop}[thm]{Proposition}
\newtheorem{lemma}[thm]{Lemma}
\newtheorem{remark}[thm]{Remark}
\newtheorem{prob}[thm]{Problem}
\newtheorem{prop-defn}[thm]{Proposition/Definition}
\newlist{casesp}{enumerate}{3} 
\setlist[casesp]{align=left, 
                 listparindent=\parindent, 
                 parsep=\parskip, 
                 font=\normalfont\bfseries, 
                 leftmargin=0pt, 
                 labelwidth=0pt, 
                 itemindent=.4em,labelsep=.4em, 
                 partopsep=0pt, 
                 }
\setlist[casesp,1]{label=Case~\Roman*:,ref=\Roman*}
\setlist[casesp,2]{label=Case~\thecasespi.\arabic*:,ref=\thecasespi.\arabic*}
\setlist[casesp,3]{label=Case~\thecasespii.\alph*:,ref=\thecasespii.\alph*}
\newcommand{\bC}{{\mathbb C}}
\newtheorem*{LBconj}{Galkin's Lower Bound Conjecture}
\begin{document}{\allowdisplaybreaks[4]

\title{On Galkin's Lower Bound Conjecture}

\thanks{2010 Mathematics Subject Classification. Primary 14N35. Secondary 14J45, 14J33.}
\date{
      }


 \keywords{Fano manifold. Quantum cohomology }

\author{Jianxun Hu }
\address{School of Mathematics, Sun Yat-sen University, Guangzhou 510275, P.R. China}

\email{stsjxhu@mail.sysu.edu.cn}
\thanks{ 
 }

\author{Huazhong Ke}
\address{School of Mathematics, Sun Yat-sen University, Guangzhou 510275, P.R. China}
\email{kehuazh@mail.sysu.edu.cn}
\thanks{
 }

\author{Changzheng Li}
 \address{School of Mathematics, Sun Yat-sen University, Guangzhou 510275, P.R. China}
\email{lichangzh@mail.sysu.edu.cn}

\author{Zhitong Su}
\address{School of Mathematics, Sun Yat-sen University, Guangzhou 510275, P.R. China}
\email{suzht@mail.sysu.edu.cn}
\thanks{
 }


\begin{abstract}
      We estimate an upper bound of the spectral radius of a linear operator on the quantum cohomology of the toric Fano manifolds     $\mathbb{P}_{\mathbb{P}^{n}}(\mathcal{O}\oplus\mathcal{O}(3))$.
       This provides a negative answer to Galkin's lower bound conjecture.
  \end{abstract}

\maketitle

\section{Introduction}

The first Chern class of a Fano manifold $X$ induces a linear operator $\hat c_1$ on  the even part $H^{\rm ev}(X)$ of the classical cohomology ring $H^*(X,\mathbb{C})$ by
  $$\hat c_1: H^{\rm ev}(X)\longrightarrow H^{\rm ev}(X); \alpha\mapsto (c_1(X)\star \alpha)|_{\mathbf{q}=1}.$$
  Here $\star$ denotes the quantum multiplication, which involves   genus-zero, three-pointed Gromov-Witten invariants of $X$, and $\mathbf{q}$ denote the quantum variables.
It is  important  to study the  distribution of eigenvalues of $\hat c_1$. Indeed in \cite{GGI}, Galkin, Golyshev and Iritani  proposed  remarkable Conjecture $\mathcal{O}$ and the relevant Gamma conjecture I and II. Conjecture $\mathcal{O}$
  concerns about the spectral radius
  $$\rho=\rho(\hat c_1):=\max\{|\lambda|\mid \lambda \mbox{ is an eigenvalue of } \hat c_1\}.$$
There is another relevant lower bound  conjecture proposed by Galkin \cite{Gal}.
 \begin{LBconj}
      For any Fano manifold $X$, we have
      $$\rho\geq \dim X+1 \mbox{ with equality if and only if } X\cong \mathbb{P}^n.$$
 \end{LBconj}

\noindent This conjecture   has been verified for some cases \cite{ESSSW, ChHa, ShWa, Ke, HKLS}, and can also be supported by the numerical computations based on the analysis in \cite{Yang} for the blowup of $\mathbb{P}^n$ along $\mathbb{P}^r$.

   As the main result of this paper, we provide a negative answer to the above conjecture, by showing the following
   for the toric Fano manifolds $\mathbb{P}_{\mathbb{P}^{n}}(\mathcal{O}\oplus\mathcal{O}(3)) $.
 \begin{thm}\label{mainthm} Let  $X= \mathbb{P}_{\mathbb{P}^{n}}(\mathcal{O}\oplus\mathcal{O}(3))$, where   $n$ is sufficiently large with $3\nmid n+1$.
Then    $\rho <\dim X+1=n+2$.
\end{thm}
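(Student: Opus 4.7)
The plan is to use Batyrev's presentation of the small quantum cohomology of the toric Fano $X$ to reduce the spectral-radius problem for $\hat c_1$ to a sharp asymptotic estimate on the roots of a one-variable polynomial.

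First, I would write down the Batyrev--Givental presentation of $QH^*(X)$. With $H$ the pullback of the hyperplane class from $\bP^n$ and $\xi$ the divisor class of a torus-invariant section of the $\bP^1$-bundle, the two primitive collections of the toric fan give
\[
QH^*(X) \;=\; \bC[H, \xi, q_1, q_2]\big/\bigl(H^{n+1} - q_1\xi^3,\; \xi^2 + 3H\xi - q_2\bigr),
\]
together with $c_1(X) = (n+4)H + 2\xi$. At $q_1 = q_2 = 1$ the ring is $2(n+1)$-dimensional, and the eigenvalues of $\hat c_1$ are the values of $(n+4)H + 2\xi$ on $\operatorname{Spec} QH^*(X)|_{\mathbf{q}=1}$. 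Eliminating $H = (1-\xi^2)/(3\xi)$ and setting $u = 1/\xi$, the problem reduces to
\[
P(u) \;:=\; (u^2 - 1)^{n+1} - 3^{n+1} u^{n-2} \;=\; 0, \qquad \lambda(u) \;=\; \frac{(n+4) u^2 - (n-2)}{3u}.
\]
The assumption $3 \nmid n+1$ means $\gcd(n+1, n-2) = 1$, which rules out the factorization $P(u) = ((u^2-1)^m)^3 - (3^m u^{m-1})^3$ that occurs precisely when $n+1 = 3m$, and hence avoids an accidental degeneracy of the spectrum in this congruence class.

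Next I carry out an asymptotic analysis. Taking an $(n+1)$-th root of $P(u) = 0$, every solution satisfies $u^2 - 1 = 3\omega u^{(n-2)/(n+1)}$ for some $(n+1)$-th root of unity $\omega$. The leading-order equation $u^2 - 3\omega u - 1 = 0$ has roots $u_\pm(\omega) = (3\omega \pm D)/2$ with $D = \sqrt{9\omega^2 + 4}$; using $u^{-3/(n+1)} = 1 - 3\log u/(n+1) + O(n^{-2})$ a first-order perturbation off $u_\pm$ gives
\[
\lambda \;=\; (n-2)\,\omega \;+\; 2\,u_\pm \;\mp\; \frac{3\,\omega\, u_\pm \log u_\pm}{D} \;+\; O(n^{-1}).
\]
The logarithmic correction is essential: without it one would conclude $|\lambda| \sim n + 1 + \sqrt{13} > n+2$, so tracking this $O(1)$ term is exactly what converts an apparent lower bound into the desired upper bound.

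Finally, bounding $|\lambda|$ uniformly in $\omega$ is the hardest step. Writing $\lambda = (n-2)\omega + C$ and expanding $|\lambda|^2 = (n-2)^2 + 2(n-2)\operatorname{Re}(\omega \bar C) + O(1)$, the inequality $|\lambda| < n+2$ reduces to
\[
\pm \operatorname{Re}(D/\omega) - 3\log|u_\pm(\omega)| \;<\; 1 + O(n^{-1})
\]
uniformly for $\omega$ on the unit circle. Parametrizing by $A := 9 + |D|^2 \in [14, 22]$ and using the identities $36\operatorname{Re}(D/\omega)^2 = A^2 - 16$ and $|u_\pm|^2 = (A \pm 6\operatorname{Re}(D/\omega))/4$, this becomes an elementary single-variable calculus inequality on $[14, 22]$. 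The main obstacle will be establishing this final estimate with margin uniform in $\omega$, $n$, and the branch $\pm$, thereby upgrading the asymptotic inequality to the strict bound $\rho < n+2$ for all sufficiently large $n$ with $3 \nmid n+1$.
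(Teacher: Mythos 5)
Your reduction is sound and, on the algebraic side, essentially equivalent to the paper's: the Batyrev presentation you write down is the Jacobi ring of the Hori--Vafa potential used in the paper, and your constraint $P(u)=0$ together with $\lambda(u)=\frac{(n+4)u^2-(n-2)}{3u}$ matches the paper's data under $u=t^{-(n+1)}$, $\lambda=\check g(t)$. (In fact your elimination of $H$ does not need $3\nmid n+1$ at all; the paper uses coprimality only for the rational parameterization $(x,y)=(t^3,t^{n+1})$, not to avoid a ``degenerate spectrum,'' so your stated reason for that hypothesis is off but harmless.) Where you genuinely diverge is the analytic part. The paper never tracks individual roots: Rouch\'e's theorem traps the $2n+2$ roots in two explicit annuli, and the maximum modulus principle applied to $\tilde g$, $\check g$ plus elementary limits gives the bound. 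You instead do per-root asymptotics: attach to each root an $(n+1)$-th root of unity $\omega$, perturb off $u_\pm(\omega)$, and reduce to a calculus inequality in $A=9+|D|^2\in[14,22]$. Your final inequality is correct: with $R=\sqrt{A^2-16}/6$ it reads $\pm R-\tfrac32\log\frac{A\pm 6R}{4}<1$, each branch is monotone in $A$, and the maxima are $\sqrt{13}-3\log\frac{3+\sqrt{13}}{2}\approx 0.021$ (plus branch, $A=22$) and $-\sqrt5+3\log\frac{3+\sqrt5}{2}\approx 0.651$ (minus branch, $A=14$), so it holds with uniform margin; your route can therefore work, and it yields sharper information (the asymptotic location of the whole spectrum) than the paper's softer estimate.

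Two issues, one a slip and one a genuine gap. The slip: carrying out the perturbation correctly, $u=u_\pm+\delta$ with $(2u_\pm-3\omega)\delta=\pm D\,\delta=-\tfrac{9\omega u_\pm\log u_\pm}{n+1}+O(n^{-2})$ and $\lambda'(u)=\tfrac{n+4}{3}+\tfrac{n-2}{3u^2}$, the $O(1)$ correction to $\lambda$ is $-3\omega\log u_\pm$ (using $u_\pm+u_\pm^{-1}=\pm D$), not $\mp\frac{3\omega u_\pm\log u_\pm}{D}$ as you wrote; your displayed reduction $\pm\operatorname{Re}(D/\omega)-3\log|u_\pm|<1$ is the one that follows from the correct term (e.g.\ at $\omega=1$ it reproduces $|\lambda|\approx n+1.02$, matching $\check g(a_+)$), so this is fixable but must be redone consistently. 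The gap: you call the final calculus estimate the hardest step, but it is elementary with margin about $0.35$; what you gloss over is the uniform justification of the expansion. Since $\lambda'(u)=O(n)$, controlling $\lambda$ up to $o(1)$ requires locating every one of the $2n+2$ roots to accuracy $o(1/n)$, i.e.\ proving, uniformly in $\omega$ and $n$ (including roots near the negative real axis such as those attached to $\omega=\pm1$, where the branch of $\log u$ must be handled), that each root equals $u_\pm(\omega)+\delta+o(1/n)$ --- say by a quantitative Rouch\'e or Newton--Kantorovich argument on disks of radius $o(1/n)$. As written, the ``$+O(n^{-1})$'' in your $\lambda$-expansion is unsubstantiated, and this uniform second-order root localization is precisely the work the paper's Rouch\'e-plus-maximum-modulus argument is structured to avoid.
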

\noindent We will give a more precise statement in \textbf{Theorem \ref{mainthm2}}.  To prove the theorem, we use mirror symmetry for toric Fano manifolds, especially the fact  \cite{Auroux, OsTy} that the eigenvalues of $\hat c_1$ are in one-to-one correspondence with the critical values of  the Hori-Vafa superpotential \cite{HoVa}   mirror to the toric Fano manifold $X$. We further reduce the spectral radius to an optimization problem in nonlinear programming, and then achieve the aim by classical analysis. This approach was used in  \cite{GHIKLS}, leading to negative answers to Conjecture $\mathcal{O}$ and Gamma conjecture I therein.

 \subsection*{Acknowledgements}

The authors would like to thank Sergey Galkin and Hiroshi Iritani   for  helpful discussions. Z. Su would like to thank   Xiaowei Wang for constant encouragement.
 The authors are  supported   in part by the National Key Research and Development Program of China No. 2023YFA100980001.
H. Ke is also supported in part by   NSFC Grant 12271532.



\section{Galkin's lower bound conjecture for $\mathbb{P}_{\mathbb{P}^n}(\mathcal{O}\oplus \mathcal{O}(3))$}

  This section is devoted to finding an upper bound of   the spectral radius for the   toric Fano manifolds
 $X=\mathbb{P}_{\mathbb{P}^n}(\mathcal{O}\oplus\mathcal{O}(3))$ with  $n$ sufficiently large,  as we will see in Theorem \ref{mainthm2}.


\subsection{Mirror symmetry for $X$}
Note that $X$ is a $\mathbb{P}^1$-bundle over $\mathbb{P}^n$, which is of dimension $n+1$ and of Picard number two. The classical cohomology ring $H^*(X)=H^*(X, \mathbb{C})$ is of dimension $2n+2$, and   $H^*(X)=H^{\rm ev}(X)$ does not contain nonzero classes of odd degree.

Moreover, $X$ is a toric Fano manifold, whose associated fan  in $\mathbb{R}^{n+1}$ has exactly $(n+3)$ primitive ray generators $b_i$, given by   (see e.g.  \cite{CLS}  for more details on toric geometry)
\begin{equation}\label{torbb}
    \begin{aligned}
          b_i&=e_i:=(0, \ldots, 0, 1, 0, \ldots, 0),\quad \mbox{for } 1\leq i\leq n+1,\\
    b_{n+2}&=-\sum_{i=1}^{n}e_i+3e_{n+1}, \qquad   b_{n+3}=-e_{n+1}.
    \end{aligned}
\end{equation}
Consequently, the Hori-Vafa superpotential $f$ mirror to   $X$ can be immediately read off, which is a holomorphic function $f: (\bC^\times)^{n+1}\rightarrow \bC$  defined by
\begin{align}\label{Laurent}
  f(\mathbf{x})=\sum_{i=1}^{n+3}\mathbf{x}^{b_i}=x_1+x_2+\cdots+x_{n+1}+\frac{x_{n+1}^3}{x_1x_2\cdots x_{n}}+\frac{1}{x_{n+1}}.
\end{align}

One remarkable statement in mirror symmetry for $X$ gives a ring isomorphism between $QH^*(X)$ and the Jacobi ring $\mbox{Jac}(f)$. Here $QH^*(X)=(H^*(X)\otimes_{\mathbb{C}} \mathbb{C}[q_1, q_2], \star)$ denotes the (small) quantum cohomology ring of $X$. It is a deformation of the classical cohomology ring $H^*(X)$, by incorporating   genus-zero, three-pointed Gromov-Witten invariants of $X$ into the quantum product (see e.g. \cite{CoKa} for more details). Precisely, there is an isomorphism $\Psi$ of $\mathbb{C}$-algebras \cite{Baty},   {\upshape  $$\Psi: QH^*(X)|_{(q_1, q_2)=(1,1)}\longrightarrow \mbox{Jac}(f)=\bC[x_1^{\pm 1},...,x_{n+1}^{\pm 1}]/(x_1\partial_{x_1}f,...,x_{n+1}\partial_{x_{n+1}}f).$$
     }
      Here we specify $\mathbf{q}=\mathbf{1}$ for only introducing  $f$ without deformation. Moreover, we have
\begin{prop}[\protect{\cite[Corollary G]{OsTy}}]\label{eigncrit}
   $\Psi(c_1(X))=[f]$. Namely, eigenvalues of $\hat c_1$ on $QH^*(X)|_{\mathbf{q}=\mathbf{1}}$, with multiplicities counted, coincide with the critical values of  $f$.
\end{prop}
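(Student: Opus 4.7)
The plan is to prove the proposition in two separate steps: (i) establish the ring-level identity $\Psi(c_1(X))=[f]$ under Batyrev's isomorphism, and then (ii) deduce the eigenvalue/critical-value correspondence from the local structure of the Jacobi ring.

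For step (i), I would unravel Batyrev's presentation \cite{Baty} of $QH^*(X)|_{\mathbf{q}=\mathbf{1}}$ for a smooth Fano toric variety. That presentation sends each toric prime divisor $D_i$, corresponding to the primitive ray generator $b_i$ in (\ref{torbb}), to the Laurent monomial $\mathbf{x}^{b_i}$; the quantum Stanley--Reisner relations become the logarithmic derivative relations $x_j\partial_{x_j}f$, and the linear relations $\sum_i \langle m, b_i\rangle D_i = 0$ for $m\in M$ become the toric identity $\sum_i\langle m, b_i\rangle \log\mathbf{x}^{b_i}=0$. Since $X$ is smooth toric, its anticanonical class is the sum of all toric divisors, $c_1(X)=\sum_{i=1}^{n+3}D_i$. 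Applying $\Psi$ term-by-term and comparing with (\ref{Laurent}) yields
$$\Psi(c_1(X))=\sum_{i=1}^{n+3}\mathbf{x}^{b_i}=[f].$$

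For step (ii), I would invoke the general structure theory of Jacobi rings of Laurent polynomials with isolated critical points. Since $X$ is Fano, $\dim_{\bC}\mathrm{Jac}(f)=\dim_{\bC}H^*(X)=2n+2$ is finite, so $f$ has only finitely many critical points in $(\bC^\times)^{n+1}$, all isolated. By the Chinese Remainder Theorem the Artinian ring $\mathrm{Jac}(f)$ decomposes as a product of its local factors,
$$\mathrm{Jac}(f)\;\cong\;\prod_{p\in\mathrm{Crit}(f)}\mathcal{O}_p,$$
with each $\mathcal{O}_p$ a finite-dimensional local $\bC$-algebra of dimension equal to the Milnor number $\mu_p$. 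On each $\mathcal{O}_p$ the maximal ideal $\mathfrak{m}_p$ is nilpotent, and $[f-f(p)]\in\mathfrak{m}_p$; hence multiplication by $[f]$ acts on $\mathcal{O}_p$ as the scalar $f(p)$ plus a nilpotent, so it contributes only the eigenvalue $f(p)$ with algebraic multiplicity $\mu_p$. Transporting along $\Psi^{-1}$, the operator $\hat c_1$ on $QH^*(X)|_{\mathbf{q}=\mathbf{1}}$ has spectrum equal to $\{f(p)\mid p\in\mathrm{Crit}(f)\}$ with multiplicities $\mu_p$, and the total count $\sum_p\mu_p=2n+2=\dim H^*(X)$ matches.

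The main obstacle is the precise bookkeeping in step (i): one has to verify that under the specialization $\mathbf{q}=\mathbf{1}$, Batyrev's isomorphism really sends the cohomology generator $D_i$ to $\mathbf{x}^{b_i}$ and not to some shifted or rescaled monomial arising from different choices of normalization of the quantum parameters; equivalently, one must check that the quantum Stanley--Reisner relations transport exactly to the system $\{x_j\partial_{x_j}f=0\}$. Once this is checked in the Fano case (where no genuinely quantum obstructions appear beyond the linear divisor relations), the rest of the argument is formal, and step (ii) is a standard consequence of Artinian decomposition.
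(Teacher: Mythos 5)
The paper does not actually prove this proposition: it is quoted, with the ring isomorphism $\Psi$ taken from \cite{Baty} and the identity $\Psi(c_1(X))=[f]$ (equivalently the eigenvalue/critical-value statement) taken from \cite[Corollary G]{OsTy}. So there is no internal argument to compare with; what you have written is essentially the standard derivation that underlies the cited result. Your step (ii) is correct and complete as a sketch: finite-dimensionality of $\mathrm{Jac}(f)$ (which follows from the existence of $\Psi$) forces the critical scheme to be Artinian, the ring splits as a product of local algebras $\mathcal{O}_p$, and on each factor $[f]$ acts as $f(p)$ plus a nilpotent since $f-f(p)\in\mathfrak{m}_p$, giving eigenvalue $f(p)$ with multiplicity $\dim_{\bC}\mathcal{O}_p$.

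In step (i), however, your dictionary is stated backwards, and one displayed identity is wrong. Under Batyrev's map $D_i\mapsto \mathbf{x}^{b_i}$, it is the \emph{linear} relations $\sum_i\langle m,b_i\rangle D_i=0$ ($m\in M$) that become the logarithmic derivatives, since $x_j\partial_{x_j}f=\sum_i\langle e_j^*,b_i\rangle\,\mathbf{x}^{b_i}$; these generate the Jacobian ideal. The quantum Stanley--Reisner relations, by contrast, become multiplicative identities among the monomials $\mathbf{x}^{b_i}$ (with the appropriate powers of $\mathbf{q}$, which at $\mathbf{q}=\mathbf{1}$ hold automatically in the Laurent polynomial ring); they are what make the map well defined, not the source of the ideal. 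The expression $\sum_i\langle m,b_i\rangle\log\mathbf{x}^{b_i}=0$ is not a valid identity and is not what is needed. Finally, the point you flag as ``the main obstacle'' --- that Batyrev's presentation computes $QH^*(X)$ for a Fano toric manifold with the normalization sending $D_i$ exactly to $\mathbf{x}^{b_i}$ at $\mathbf{q}=\mathbf{1}$ --- is precisely the nontrivial content of \cite{Baty} and \cite{OsTy}; it is legitimate to quote it (as the paper does), but your sketch does not supply it, so on its own your argument reduces the proposition to that citation rather than replacing it.
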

\subsection{Distribution of critical values of $f$}

Assume  $3$ and $n+1$   to be coprime, i.e., $3\nmid n+1$.

Critical points $\mathbf{x}=(x_1, \ldots, x_{n+1})$ of $f$ are the solutions to the system of equations  $\partial_{x_i}f=0, \, 1\le i\le n+1$.
A simple calculation shows that
\begin{equation*}
    x_1=x_2=\cdots=x_n=:x \in \mathbb{C}^\times.
\end{equation*}
Denote  $y:=x_{n+1}\in\bC^\times$.  Then the system of equations $\frac{\partial f}{\partial x_i}=0$ can be reduced to
\begin{equation}\label{eqnnn}
        1-\dfrac{y^3}{x^{n+1}}=0,\quad
        1+\dfrac{3y^{2}}{x^n}-\dfrac{1}{y^2}=0,\quad (x,y)\in(\bC^\times)^2.
\end{equation}

Since $3$ and $n+1$ are coprime,  $l_1\cdot 3 + l_2 \cdot(n+1)=1$ for some integers $l_1, l_2$.  By setting $t=y^{l_2}x^{l_1}$,
we obtain the one-to-one parameterization $(x, y) =(t^{3}, t^{n+1})$ with $t\in\mathbb{C}^\times$ for solutions to the first equation in \eqref{eqnnn}. Therefore the system
\eqref{eqnnn} is equivalent to $h(t)=1$ with
\begin{equation}\label{contr}
    h(t):=t^{2n+2}+3t^{n+4}.
\end{equation}
Consequently, the  critical values of $f$ (with multiplicity counted) are precisely given by  $g(\alpha)$ at the roots $\alpha$ of $h-1$, and we further notice $g(\alpha)=\tilde g(\alpha)=\check g(\alpha)$, where
\begin{equation*}\label{defhg}
g(t):=n t^3+t^{n+1}+t^3+\frac{1}{t^{n+1}}, \quad \Tilde{g}(t):=(n-2)t^3+\frac{2}{t^{n+1}}, \quad \check g (t):=2 t^{n+1}+(n+4)t^3.
\end{equation*}
 Therefore we are led to   the following optimization problem in nonlinear programming.
\begin{prob}\label{NLP1}
  $\textnormal{Maximize }\,\,   |g(t)| \quad
\textnormal{subject to }\,\, h(t)=1,\,\, t\in \mathbb{C}^\times.$
  \end{prob}

Viewed as a real function, $h(t)$ is strictly increasing on $\mathbb{R}_{\geq0}$. Note $h(0)<1<h(1)$. Hence $h(t)-1$ has a unique positive single root $a_+$ with
\[a_+\in (0,1).\]
Moreover, the following lemma follows immediately from the observation \[\lim_{n\rightarrow+\infty}h(1-\frac{1}{n})=\lim_{n\rightarrow+\infty}\big((1-\frac{1}{n})^{2n+2}+3(1-\frac{1}{n})^{n+4}\big)=\frac{1}{e^2}+\frac{3}{e}>1.\]
\begin{lemma}\label{rangea}
    There exists an integer $N_1>0$, such that $a_+<1-\frac{1}{n}$ for any $n\ge N_1$.
\end{lemma}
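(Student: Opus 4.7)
The plan is to leverage the strict monotonicity of $h$ on $\mathbb{R}_{\geq 0}$ together with the limit computation already displayed before the lemma. Since $h$ is strictly increasing on $[0,\infty)$ and $h(a_+) = 1$, the inequality $a_+ < 1 - \frac{1}{n}$ is equivalent to $h\bigl(1 - \tfrac{1}{n}\bigr) > 1$. So the entire task reduces to showing that $h\bigl(1 - \tfrac{1}{n}\bigr) > 1$ for all sufficiently large $n$.

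First, I would record the limit that has already been observed, namely
\begin{equation*}
\lim_{n \to \infty} h\!\left(1 - \tfrac{1}{n}\right) = \frac{1}{e^2} + \frac{3}{e},
\end{equation*}
using the standard limits $(1 - 1/n)^{2n+2} \to e^{-2}$ and $(1 - 1/n)^{n+4} \to e^{-1}$. Numerically, $\tfrac{1}{e^2} + \tfrac{3}{e} > 1$, so one can set $\varepsilon := \tfrac{1}{2}\bigl(\tfrac{1}{e^2} + \tfrac{3}{e} - 1\bigr) > 0$.

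Next, by the definition of the limit, there exists $N_1 \in \mathbb{Z}_{>0}$ such that for every $n \geq N_1$,
\begin{equation*}
\left| h\!\left(1 - \tfrac{1}{n}\right) - \Bigl(\tfrac{1}{e^2} + \tfrac{3}{e}\Bigr) \right| < \varepsilon,
\end{equation*}
which in particular yields $h\bigl(1 - \tfrac{1}{n}\bigr) > 1 + \varepsilon > 1$. Combined with $h(a_+) = 1$ and the strict monotonicity of $h$ on $\mathbb{R}_{\geq 0}$, this forces $a_+ < 1 - \frac{1}{n}$, completing the proof.

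There is essentially no obstacle here beyond unpacking the limit into an effective statement; the only thing to be mildly careful about is confirming that $1 - \frac{1}{n} \geq 0$ so that the monotonicity of $h$ on $\mathbb{R}_{\geq 0}$ applies, which is automatic once $N_1 \geq 1$.
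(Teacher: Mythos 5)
Your proposal is correct and follows essentially the same route as the paper: the paper likewise deduces the lemma from the strict monotonicity of $h$ on $\mathbb{R}_{\geq 0}$ together with the limit $\lim_{n\to\infty}h\bigl(1-\tfrac{1}{n}\bigr)=\tfrac{1}{e^2}+\tfrac{3}{e}>1$, and your write-up merely spells out the $\varepsilon$-unpacking of that limit.
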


 Recall   Rouch\'e's theorem in classical complex analysis   (see, e.g., \cite[Theorem 6.24]{GKR}).
\begin{lemma}[Rouch\'e's theorem]
    Let $f_1$ and $f_2$ be holomorphic functions  on $\{z\in \mathbb{C}\mid |z|\leq R\}$ where $R>0$. If $|f_2|<|f_1|$ on $\{z\in \mathbb{C}\mid |z|=R\}$, then $f_1$ and $f_1+f_2$ have the same number of zeros (counted with multiplicity) in $\{z\in \mathbb{C}\mid |z|\leq R\}$.
\end{lemma}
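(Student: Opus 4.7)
The plan is to apply the argument principle from complex analysis: for a meromorphic function $g$ on the closed disk $\{|z| \leq R\}$ that has no zeros or poles on the circle $\{|z|=R\}$, the difference $N(g) - P(g)$ between the number of zeros and the number of poles in the interior, each counted with multiplicity, equals $\frac{1}{2\pi i}\oint_{|z|=R} g'(z)/g(z)\,dz$, which is the winding number of $g$ around the origin along the boundary. Taking this as a black box, I would prove Rouch\'e's theorem via a one-parameter homotopy deforming $f_1$ into $f_1+f_2$.

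First, set $g_t = f_1 + t f_2$ for $t \in [0,1]$. On $\{|z|=R\}$, the reverse triangle inequality combined with the hypothesis $|f_2| < |f_1|$ gives $|g_t(z)| \geq |f_1(z)| - t|f_2(z)| \geq |f_1(z)| - |f_2(z)| > 0$, so $g_t$ is non-vanishing on the boundary for every $t \in [0,1]$. In particular both $f_1 = g_0$ and $f_1 + f_2 = g_1$ are non-vanishing there, so it makes sense to count their zeros via the argument principle.

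Next, let $N(t)$ denote the number of zeros of $g_t$ in the open disk, counted with multiplicity. The argument principle expresses
\[N(t) = \frac{1}{2\pi i}\oint_{|z|=R} \frac{g_t'(z)}{g_t(z)}\,dz.\]
Because $|g_t(z)|$ is continuous and strictly positive on the compact set $[0,1] \times \{|z|=R\}$, it is uniformly bounded away from zero there, so the integrand $g_t'(z)/g_t(z)$ is jointly continuous in $(t,z)$, and $N(t)$ is a continuous function of $t$. An integer-valued continuous function on the connected interval $[0,1]$ must be constant, hence $N(f_1) = N(0) = N(1) = N(f_1+f_2)$, which is the desired conclusion.

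I do not expect any genuine obstacle here: the argument principle itself is a standard consequence of the residue theorem applied to $g'/g$, whose residues at zeros and poles of $g$ record the respective multiplicities, and the homotopy bookkeeping above is routine. A slicker alternative that sidesteps the homotopy considers the meromorphic quotient $\phi = (f_1+f_2)/f_1$ on the closed disk; the hypothesis $|f_2| < |f_1|$ on $\{|z|=R\}$ says $|\phi - 1| < 1$ there, so $\phi(\{|z|=R\})$ lies inside the open disk $\{|w-1|<1\} \subset \bC^\times$, forcing the winding number of $\phi$ about $0$ to vanish; the argument principle applied to $\phi$ then yields $N(f_1+f_2) - N(f_1) = 0$ directly.
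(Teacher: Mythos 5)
Your argument is correct. Note, however, that the paper does not prove this lemma at all: it is recalled as a classical result with a citation to \cite[Theorem 6.24]{GKR}, so there is no in-paper proof to compare against. Both of your routes are the standard textbook arguments --- the homotopy $g_t=f_1+tf_2$ with the argument principle and integer-valued continuity, and the quotient $\phi=(f_1+f_2)/f_1$ whose boundary image lies in $\{|w-1|<1\}$ and hence has zero winding about the origin --- and both are carried out correctly, including the key boundary estimate $|g_t|\geq |f_1|-|f_2|>0$ which guarantees that no zeros sit on $\{|z|=R\}$ (so counting zeros in the closed disc is the same as in the open disc). Either version would serve as a complete proof of the lemma as stated.
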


We say that the subset $\{z\in\mathbb{C}\mid |z|\leq 1\}$ is the unit disc.
\begin{prop}\label{mainprop1} There are exactly $n+4$ roots $\alpha$ of $h-1$ in the unit disc; in particular, they are all in  $\{z\in \mathbb{C}\mid a_+\leq |z|\leq 1\}$. Moreover, there exists an integer $N_2>N_1$, such that for any $n\geq N_2$,    $$|g(\alpha)|<n+2.$$ 
\end{prop}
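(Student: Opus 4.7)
The first assertion and the localization $a_+\le|\alpha|\le 1$ will follow from Rouché's theorem and elementary triangle-inequality estimates, while the eigenvalue bound $|g(\alpha)|<n+2$ reduces to a one-variable optimization that I would handle by asymptotic analysis as $n\to\infty$.

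For the counting, I would decompose $h(t)-1=f_1(t)+f_2(t)$ with $f_1(t)=3t^{n+4}$ and $f_2(t)=t^{2n+2}-1$: on $|t|=1$ one has $|f_2|\le 2<3=|f_1|$, so by Rouché $h-1$ and $f_1$ have the same number of roots (namely $n+4$) in the closed unit disc, and the strict inequality also rules out any root on $|t|=1$. For the lower bound on $|\alpha|$, the triangle inequality applied to $\alpha^{2n+2}+3\alpha^{n+4}=1$ gives $h(|\alpha|)\ge 1=h(a_+)$, and the monotonicity of $h$ on $\mathbb{R}_{\ge 0}$ yields $|\alpha|\ge a_+$.

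For the eigenvalue bound I would invoke the identity $g(\alpha)=\check g(\alpha)=2\alpha^{n+1}+(n+4)\alpha^3$, so that $|g(\alpha)|\le 2|\alpha|^{n+1}+(n+4)|\alpha|^3=:\xi(|\alpha|)$. Since $\xi$ is strictly increasing on $(0,\infty)$, it suffices to control $|\alpha|$ from above. The reverse triangle inequality applied to $\alpha^{2n+2}+3\alpha^{n+4}=1$, combined with the elementary observation that $3r^{n+4}\ge r^{2n+2}$ for $0<r\le 1$, gives $\phi(|\alpha|)\le 0$ with $\phi(r):=3r^{n+4}-r^{2n+2}-1$. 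A quick derivative check shows that $\phi$ is unimodal on $(0,\infty)$ with $\phi(0)=-1<0<1=\phi(1)$, so $\phi$ has exactly two positive zeros $b_-<1<b_+$ and therefore $|\alpha|\le b_-$. Thus one only needs $\xi(b_-)<n+2$ for $n\ge N_2$.

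The remaining asymptotic estimate of $\xi(b_-)$ is where I expect the real work to lie. Writing $b_-=1-c_n/n$ and passing to the limit in $3b_-^{n+4}-b_-^{2n+2}=1$ yields $3e^{-c}-e^{-2c}=1$, whose unique positive solution is $c=\ln((3+\sqrt 5)/2)$. Standard Taylor expansions then give
\[\xi(b_-)-n=\bigl((n+4)b_-^3-n\bigr)+2b_-^{n+1}\longrightarrow (4-3c)+2e^{-c}=7-\sqrt 5-3\ln\tfrac{3+\sqrt 5}{2}\approx 1.877,\]
which is strictly less than $2$. To render this rigorous and effective I would fix some $c_0\in(0,c)$ (concretely $c_0=0.95$), verify $\phi(1-c_0/n)>0$ for $n$ sufficiently large (so that $b_-<1-c_0/n$), and then check directly that $\xi(1-c_0/n)<n+2$ via Taylor expansions of $(1-c_0/n)^3$ and $(1-c_0/n)^{n+1}$; the roughly $0.12$ gap between $7-\sqrt 5-3\ln((3+\sqrt 5)/2)$ and $2$ supplies the safety margin that survives the error terms for all $n\ge N_2$.
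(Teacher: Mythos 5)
Your proposal is correct, and while the root-counting half coincides with the paper's argument (the same Rouch\'e splitting $3t^{n+4}$ versus $t^{2n+2}-1$ on $|t|=1$), you reach the bound $|g(\alpha)|<n+2$ by a genuinely different and more elementary route. The paper works with $\Tilde{g}(t)=(n-2)t^3+2t^{-(n+1)}$ and applies the maximum modulus principle on the annulus $a_+\le|z|\le 1$, so the only boundary data it needs are $\Tilde{g}(1)=n$ and $\Tilde{g}(a_+)=\check g(a_+)\le\check g(1-\tfrac1n)$ (via Lemma \ref{rangea}); no upper bound on $|\alpha|$ better than $1$ is ever required, and the final margin is roughly $2-(1+\tfrac2e)\approx 0.26$. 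You instead estimate $|\check g(\alpha)|\le 2|\alpha|^{n+1}+(n+4)|\alpha|^3$ by the plain triangle inequality, which forces the extra localization $|\alpha|\le b_-$, the root in $(0,1)$ of $3r^{n+4}-r^{2n+2}=1$, obtained from the reverse triangle inequality together with the unimodality of $\phi$ (note this deduction also uses $|\alpha|\le 1$, i.e.\ that $\alpha$ is a unit-disc root, which is exactly the case at hand). Your route dispenses with the maximum modulus principle and with Lemma \ref{rangea} altogether, and as a by-product sharpens the localization of the unit-disc roots to $a_+\le|\alpha|\le 1-\tfrac{0.95}{n}$ for large $n$; the price is a thinner numerical margin ($4-3c_0+2e^{-c_0}\approx 1.92$ against $2$, with $c_0=0.95$), and the heuristic step ``write $b_-=1-c_n/n$ and pass to the limit'' is not itself justified --- but you correctly replace it by the monotone comparison $\phi(1-c_0/n)>0\Rightarrow b_-<1-c_0/n$ followed by a limit estimate of $\xi(1-c_0/n)-(n+2)$, which closes the argument. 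Your triangle-inequality derivation of $|\alpha|\ge a_+$ is likewise a small simplification of the paper's maximum-modulus step.
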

\begin{proof}
Applying Rouch\'e's theorem to $3t^{n+4}$ and $t^{2n+2}-1$  on the unit circle, we have $$|3t^{n+4}|=3>2=|t|^{2n+2}+1\geq |t^{2n+2}-1|.$$
 Thus $h(t)-1=3t^{n+4}+t^{2n+2}-1$ has exactly   $n+4$ roots $\alpha$ in the unit disk.
          By the maximum modulus principle, for any $|z|<a_+$,
      $|h(z)|<\max\limits_{|\beta|=a_+} |h(\beta)|\leq h(a_+)=1$.
 That is, $h(t)-1$ has no roots in $\{z\mid |z|<a_+\}$. Hence, the first statement follows.

 By the maximum modulus principle for $\Tilde{g}$, when $n\geq2$, we have
 \begin{equation*}
 |g(\alpha)|=|\Tilde{g}(\alpha)|\leq \max_{|t|\in \{a_+, 1\}}|\Tilde{g}(t)|\max \{\Tilde{g}(a_+),\Tilde{g}(1)\}=\max\{\Tilde{g}(a_+), n\}.
\end{equation*}
Noting that   $\check{g}$ is strictly increasing   in $\mathbb{R}_{\ge0}$, we have  $\Tilde{g}(a_+)=\check{g}(a_+)<\check{g}(1-\frac{1}{n})$ for $n\geq N_1$, by Lemma \ref{rangea}.  Note $\lim\limits_{n\to \infty} 2(1-{1\over n})^{(n+1)}={2\over e}$ and $
\lim\limits_{n\to \infty} {(-4 + 11 n - 9 n^2)\over n^3}=0$. Take  $\epsilon:={1\over 2}-{1\over e}>0$. Then there exists $N_2>N_1$ such that for any $n\geq N_2$,
\begin{align*}
  \check g(1-{1\over n})  &
=2(1-\frac{1}{n})^{n+1}+{(-4 + 11 n - 9 n^2)\over n^3}+(n+1)\\
&< ({2\over e}+\epsilon)+ \epsilon+(n+1)=n+2.
\end{align*}
Hence, $|g(\alpha)|\leq \max\{\Tilde{g}(a_+), n\}\leq \max\{\check g(1-{1\over n}), n\}<n+2$, whenever $n\geq N_2$.
\end{proof}


It remains to discuss the  $n-2$ roots of   $h-1$ outside the   unit disk, which  require more   careful analysis. The following Lemma \ref{explemma} will be used in the proof of Proposition \ref{mainprop2}.

\begin{lemma}\label{explemma}
For any $0<r<\frac{1}{2}$ and $n>10$, we have $$\max_{|z|=r}|(3-z)^3({n-2\over n}+{2z\over n})^{n-2}|=(3-r)^3 ({n-2\over n}+{2r\over n})^{n-2}.$$
\end{lemma}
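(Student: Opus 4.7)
The plan is to parametrize the circle $\{|z|=r\}$ by $z=re^{i\theta}$, observe that $|(3-z)^3(\tfrac{n-2}{n}+\tfrac{2z}{n})^{n-2}|^2$ depends only on $u:=\cos\theta$, and then show that the resulting one-variable function is non-decreasing on $[-1,1]$. Since the modulus is non-negative, this forces the maximum on the circle to occur at $u=1$, i.e., at $z=r$.

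For Step 1, using the identity $|a+bz|^2=a^2+2ab r\cos\theta+b^2 r^2$ for real $a,b$ with $z=re^{i\theta}$, the squared modulus becomes
\[
H(u)\;:=\;(9-6ru+r^2)^3\cdot\Bigl(\tfrac{(n-2)^2+4(n-2)ru+4r^2}{n^2}\Bigr)^{n-2},
\]
a function of $u\in[-1,1]$ alone. For Step 2, the logarithmic derivative simplifies to
\[
\frac{H'(u)}{H(u)}=-\frac{18r}{9-6ru+r^2}+\frac{4(n-2)^2 r}{(n-2)^2+4(n-2)ru+4r^2},
\]
so $H'(u)\ge 0$ is equivalent, after clearing the (positive) denominators, to
\[
2(n-2)^2(9-6ru+r^2)\;\ge\;9\bigl((n-2)^2+4(n-2)ru+4r^2\bigr).
\]

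Step 3 is the verification of this inequality. Setting $m:=n-2\ge 9$ (since $n>10$) and rearranging, it becomes
\[
9m^2+2r^2(m^2-18)\;\ge\;12mr(m+3)\,u.
\]
For $u\le 0$ the right-hand side is non-positive while, using $m\ge 9$, the left-hand side is strictly positive, so the inequality is automatic. For $u\in(0,1]$, the right-hand side is maximized at $u=1$; since $r<\tfrac12$, one has $12mr(m+3)<6m(m+3)=6m^2+18m$, and it suffices to check
\[
3m^2+2r^2(m^2-18)\;\ge\;18m,
\]
which in turn follows from $3m(m-6)\ge 0$ for $m\ge 6$. Hence $H$ is non-decreasing on $[-1,1]$, so $\max_{|z|=r}|F(z)|^2=H(1)$, and taking square roots yields the lemma.

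The main obstacle is the inequality bookkeeping in Step 3, together with confirming the sign of $H'$ uniformly over the entire range $u\in[-1,1]$ rather than at isolated points. The geometric intuition behind why the argument works is that the high-exponent factor $(\tfrac{n-2+2z}{n})^{n-2}$ dominates the cubic factor $(3-z)^3$: the former attains its maximal modulus on $|z|=r$ at $z=r$, the latter at $z=-r$, and for $n>10$ the former outweighs the latter.
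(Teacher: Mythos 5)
Your proof is correct and follows essentially the same route as the paper: both write the squared modulus on $|z|=r$ as a function of the real part of $z$ and show that this one-variable function is increasing, so the maximum occurs at $z=r$. The only difference is bookkeeping — you verify the sign of the derivative via the logarithmic derivative and a case split on the sign of $u$, while the paper bounds the derivative of $\theta(s)$ directly from below — so no further comment is needed.
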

\begin{proof}
   For $|z|=r$, we note $s={\rm Re}(z)\in [-r, r]$  and write
   \begin{equation}\label{absolutevalue}
         |(3-z)^3({n-2\over n}+{2z\over n})^{n-2}|^2={\theta(s)\over n^{2n-4}},
    \end{equation}
  where $$\theta(s)=(9-6s+r^2)^3((n-2)^2+(4n-8)s+4r^2)^{n-2}.$$
By direct calculation, we have
\begin{align*}
    {d\theta\over ds}(s)&=2 (9 + r^2 - 6 s)^2 \big((-2 + n)^2 + 4 r^2 + 4 (-2 + n) s\big)^{-3 +  n} \cdot\\
    &\qquad \big(2 (-2 + n)^2 (9 + r^2 - 6 s) -
   9 ((-2 + n)^2 + 4 r^2 + 4 (-2 + n) s)\big)\\
   &> 2\cdot 6^2\cdot ((n-2)^2-2(n-2))^{-3+n}\cdot (2(n-2)^2\cdot 6-9((n-2)^2+1+2(n-2))>0.
\end{align*}
    Thus \eqref{absolutevalue} is maximized when $s=\mathrm{Re}(z)=r$, i.e., $z=r$.
\end{proof}

Let   $r_1:=0.96$, $r_2:=1.2$, $\varepsilon_0:=0.001$,  and $i\in \{1, 2\}$. One can check that
\begin{equation}\label{varep}
    \varepsilon_0<\min\{3e^{r_1}-e^{2r_1}-1, e^{2r_2}-3e^{r_2}-1\}
    \quad\mbox{and}\quad (3-{1\over e^{r_i}})^3e^{{2\over e^{r_i}}+\varepsilon_0}<e^4-2\varepsilon_0.
\end{equation}
These inequalities will be used in the proof of Proposition \ref{mainprop2}.

\begin{prop}\label{mainprop2}
There are exactly $n-2$ roots $\alpha$ of $h-1$ in   $\{z\in \mathbb{C}\mid  |z|> 1\}$. Moreover, there exists an integer $N_3>10$, such that for any $n\geq N_3$, $$1+{r_1\over  n}\leq |\alpha|\leq 1+{r_2\over n}\quad\mbox{and}\quad |g(\alpha)|<n+2.$$
\end{prop}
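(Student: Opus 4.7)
The plan is to carry out the proof in three steps: count the roots outside the unit disk using Proposition \ref{mainprop1}, localize them inside a thin annulus via two applications of Rouch\'e's theorem, and bound $|g(\alpha)|$ by recasting $|\check g(\alpha)|^{n-2}$ into a form to which Lemma \ref{explemma} directly applies.

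For the count, $h-1$ is a polynomial of degree $2n+2$, and Proposition \ref{mainprop1} places exactly $n+4$ of its roots in the closed unit disk; the remaining $n-2$ therefore lie in $\{|z|>1\}$. For the localization, I apply Rouch\'e's theorem on the circles $|z|=1+r_2/n$ and $|z|=1+r_1/n$. On the outer circle, the decomposition $h(z)-1 = z^{2n+2} + (3z^{n+4}-1)$ combined with the asymptotics $(1+r_2/n)^{2n+2} \to e^{2r_2}$ and $3(1+r_2/n)^{n+4}+1 \to 3e^{r_2}+1$, together with the strict inequality $e^{2r_2}-3e^{r_2}-1>\varepsilon_0$ from \eqref{varep}, yields $|z|^{2n+2} > |3z^{n+4}-1|$ for $n$ large; Rouch\'e then traps all $2n+2$ roots inside $\{|z|\leq 1+r_2/n\}$. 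On the inner circle, the symmetric decomposition $h(z)-1 = 3z^{n+4} + (z^{2n+2}-1)$ together with the companion inequality $3e^{r_1}-e^{2r_1}-1>\varepsilon_0$ from \eqref{varep} produces exactly $n+4$ roots in $\{|z|\leq 1+r_1/n\}$---the same count as in the unit disk. Consequently the $n-2$ exterior roots all sit in the annulus $1+r_1/n \leq |\alpha| \leq 1+r_2/n$.

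For the magnitude bound, I work with $g(\alpha) = \check g(\alpha) = (n+4)\alpha^3 + 2\alpha^{n+1}$ on the zero locus of $h-1$ and make the key substitution $s:=\alpha^{n-2}+3$. Then $\alpha^{n+1} = \alpha^3(s-3)$ and $\alpha^{n+1}+3\alpha^3 = \alpha^3 s$, so the constraint $\alpha^{n+1}(\alpha^{n+1}+3\alpha^3)=1$ reduces to $\alpha^6 s(s-3)=1$, which gives $|s| = 1/|\alpha|^{n+4}$. Simultaneously $\check g(\alpha) = \alpha^3(n-2+2s)$, and raising to the $(n-2)$-th power yields
\[
|\check g(\alpha)|^{n-2} \;=\; |\alpha^{n-2}|^3 \, |n-2+2s|^{n-2} \;=\; |(3-s)^3 (n-2+2s)^{n-2}|.
\]
For $\alpha$ in the annulus and $n$ large, $|s|$ lies in a subinterval of $(0,1/2)$ with limiting interval $[e^{-r_2}, e^{-r_1}] \subset (0,1/2)$, so Lemma \ref{explemma}, applied to $(3-z)^3(n-2+2z)^{n-2} = n^{n-2}(3-z)^3((n-2)/n+2z/n)^{n-2}$, delivers $|\check g(\alpha)|^{n-2} \leq (3-|s|)^3(n-2+2|s|)^{n-2}$. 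Hence $|\check g(\alpha)|<n+2$ reduces to $(3-|s|)^3 < [(n+2)/(n-2+2|s|)]^{n-2}$, and since the right-hand side equals $(1+(4-2|s|)/(n-2+2|s|))^{n-2} \to e^{4-2|s|}$ as $n\to\infty$, the asymptotic target is $(3-|s|)^3 e^{2|s|} < e^4$. This is secured by the second inequality of \eqref{varep} at the endpoints $|s|=1/e^{r_i}$, and by the monotonicity of $\sigma \mapsto (3-\sigma)^3 e^{2\sigma}$ on $(0,3/2)$ (its derivative equals $(3-\sigma)^2 e^{2\sigma}(3-2\sigma) > 0$) across the full interval of limiting values of $|s|$. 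The $\varepsilon_0$ buffer in \eqref{varep} absorbs the $O(1/n)$ discrepancy between the finite-$n$ expressions and their limits, securing the strict bound for all $n \geq N_3$.

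The main obstacle is the tightness of the final inequality. A naive triangle-inequality estimate $|\tilde g(\alpha)| \leq (n-2)|\alpha|^3 + 2/|\alpha|^{n+1}$ at the worst case $|\alpha|=1+r_2/n$ yields an asymptote of $n-2+3r_2+2e^{-r_2} \approx n+2.2$, already exceeding $n+2$. Exploiting the constraint $h(\alpha)=1$ through the substitution $s=\alpha^{n-2}+3$ and invoking the sharp maximum-modulus inequality of Lemma \ref{explemma} is therefore indispensable, and the calibrations $r_1=0.96$, $r_2=1.2$, $\varepsilon_0=0.001$ are engineered precisely so that $(3-1/e^{r_i})^3 e^{2/e^{r_i}}$ falls just below $e^4$ with enough slack for the finite-$n$ corrections.
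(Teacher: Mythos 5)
Your proposal is correct and follows essentially the same route as the paper: the same Rouch\'e decompositions on the circles $|z|=1+r_1/n$ and $|z|=1+r_2/n$ using the margins in \eqref{varep}, the same use of the constraint $h(\alpha)=1$ to rewrite $g(\alpha)^{n-2}$ (your $s=\alpha^{n-2}+3=1/\alpha^{n+4}$ is exactly the paper's substitution $\alpha^{n-2}=\tfrac{1}{\alpha^{n+4}}-3$), Lemma \ref{explemma} as the key sharp estimate, and the same asymptotic target $(3-\sigma)^3e^{2\sigma}<e^4$ with the $\varepsilon_0$ buffer. The only difference is bookkeeping: the paper first invokes the maximum modulus principle in $t$ over the annulus, reducing to the two boundary circles where $|s|$ takes its extreme values, whereas you apply Lemma \ref{explemma} directly on the circle of radius $|s|$ and handle the intermediate radii via the monotonicity of $\sigma\mapsto(3-\sigma)^3e^{2\sigma}$ on $(0,3/2)$ --- an equivalent and equally valid reduction.
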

\begin{proof} Notice the following two limits
\begin{align*}
    \lim_{n\rightarrow+\infty}\big( 3(1+\frac{r_1}{n})^{n+4}-(1+\frac{r_1}{n})^{2n+2}-1\big)&=3e^{r_1}-e^{2r_1}-1 >\varepsilon_0,\\
\lim_{n\rightarrow+\infty}\big( (1+\frac{r_2}{n})^{2n+2}-3(1+\frac{r_2}{n})^{n+4}-1\big)&=e^{2r_2}-3e^{r_2}-1 >\varepsilon_0,
\end{align*}
where the inequality follows from \eqref{varep}. Thus there exists $N'_3>2$, such that for any $n\geq N'_3$, the following (i) and (ii) hold.
\begin{enumerate}[label=(\roman*)]
    \item For any $|t|=1+\frac{r_1}{n}$,
     $$|3t^{n+4}|-|t^{2n+2}-1|\geq 3(1+\frac{r_1}{n})^{n+4}-(1+\frac{r_1}{n})^{2n+2}-1>\varepsilon_0>0.$$
    \item For any $|t|=1+\frac{r_2}{n}$,
     $$|t^{2n+2}|-|3t^{n+4}-1|\geq (1+\frac{r_2}{n})^{2n+2}-3(1+\frac{r_2}{n})^{n+4}-1>\varepsilon_0>0.$$
\end{enumerate}
By applying Rouch\'e's theorem to (ii),   all the  $2n+2$ roots $\beta$ of $h-1$ satisfy
 $|\beta|\leq 1+{r_2\over n}$.
By applying Rouch\'e's theorem to (i), $h-1$ has exactly $n+4$ roots $\gamma$ with $|\gamma|\leq 1+{r_1\over n}$. Then we conclude  that  $1+\frac{r_1}{n}\le|\alpha|\le 1+\frac{r_2}{n}$,  since $h-1$ already has   $n+4$ roots  inside the unit disk by Proposition \ref{mainprop1}.

Note  $\alpha^{n-2}=\frac{1}{\alpha^{n+4}}-3$. We have the following (in)equalities.

\begin{align}
|{\Tilde{g}(\alpha)\over n}|^{n-2} &=\big|(\frac{1}{\alpha^{n+4}}-3)^3(1-\frac{2-{2\over\alpha^{n+4}}}{n})^{n-2}\big| \nonumber\\
 \label{ine1}   &\leq \max_{|t|\in \{1+{r_1\over n}, 1+{r_2\over n}\}} \big|(\frac{1}{t^{n+4}}-3)^3(1-\frac{2-{2\over t^{n+4}}}{n})^{n-2}\big|  \\
\label{ine2} &= \max_{|t|\in \{1+{r_1\over n}, 1+{r_2\over n}\}} \big|(\frac{1}{|t|^{n+4}}-3)^3(1-\frac{2-{2\over |t|^{n+4}}}{n})^{n-2}\big|\quad(\mbox{for } n\geq N_4)\\
 \label{ine3}&<\max_{i\in \{1, 2\}}\big|(\frac{1}{(1+{r_i\over n})^{n+4}}-3)^3(1-\frac{2-{2\over {  e^{r_i}}}-\varepsilon_0}{n})^{n-2}\big|
 \quad\,\,\,(\mbox{for } n\geq N_4)\\
 \label{ine4} &<\max_{i\in \{1, 2\}}(3-\frac{1}{e^{r_i}})^3e^{{2\over {  e^{r_i}}}+\varepsilon_0-2}+\varepsilon_0
 \qquad\qquad\qquad\qquad\quad\,\,\, (\mbox{for } n\geq N_5)\\
  \label{ine5}&<e^2-\varepsilon_0
  \\
  \label{ine6}&<(1+\frac{2}{n})^{n-2}
 \qquad \qquad\qquad\qquad\qquad\quad  \qquad\qquad\quad\quad\,\,(\mbox{for } n\geq N_6).
\end{align}
Here the inequality \eqref{ine1} follows from the maximum modulus principle.
Since $$\lim\limits_{n\to \infty}{1\over (1+{r_i\over n})^{n+4}}={1\over e^{r_i}}<{1\over e^{r_i}}+{\varepsilon_0\over 2}<{1\over 2},$$ there exists $N_4>10$ such that  $\frac{1}{(1+{r_i\over n})^{n+4}}<{1\over e^{r_i}}+{\varepsilon_0\over 2}$ holds for any $n\geq N_4$ and   $i\in\{1, 2\}$. Consequently, the equality \eqref{ine2} holds by Lemma \ref{explemma}. The inequalities \eqref{ine3}, \eqref{ine4}, \eqref{ine6} follow directly from the definition of limit, and the inequality \eqref{ine5} holds by \eqref{varep}.

 Hence, we are done, by taking $N_3=\max\{N'_3,N_4, N_5, N_6\}$ and noting $g(\alpha)=\Tilde{g}(\alpha)$.
 \end{proof}

\begin{thm}\label{mainthm2}
     Let $X= \mathbb{P}_{\mathbb{P}^{n}}(\mathcal{O}\oplus\mathcal{O}(3))$, where   $n>\max\{N_2,N_3\}$ and $3\nmid n+1$. Then    \[\rho<\dim X+1=n+2.\]
\end{thm}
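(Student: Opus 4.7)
The plan is to assemble the two technical propositions \ref{mainprop1} and \ref{mainprop2}, which together handle all critical values of the Hori--Vafa superpotential. First I would invoke Proposition \ref{eigncrit}: every eigenvalue of $\hat c_1$ on $QH^*(X)|_{\mathbf{q}=\mathbf{1}}$ is a critical value of $f$, so
\[
\rho = \max\{|f(\mathbf{x})| : \mathbf{x} \text{ is a critical point of } f\}.
\]

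Next I would translate this into the one-variable optimization Problem \ref{NLP1} via the reduction already carried out in Section 2.2. Under the coprimality hypothesis $3 \nmid n+1$, critical points of $f$ are in bijection with roots $\alpha \in \mathbb{C}^\times$ of $h(t) - 1 = t^{2n+2} + 3t^{n+4} - 1$ through $(x_1, \ldots, x_n, x_{n+1}) = (\alpha^3, \ldots, \alpha^3, \alpha^{n+1})$, and the associated critical value is $g(\alpha)$. Since $h - 1$ is a polynomial of degree $2n+2$ with $h(0) - 1 = -1 \neq 0$, it admits exactly $2n+2$ roots in $\mathbb{C}^\times$ counted with multiplicity, and this exhausts all critical values of $f$.

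The next step is to split these roots by modulus. Proposition \ref{mainprop1} places $n+4$ of them in the closed unit disk and, for $n \geq N_2$, bounds $|g(\alpha)| < n+2$ on them; Proposition \ref{mainprop2} places the remaining $n-2$ roots in $\{|z| > 1\}$ and, for $n \geq N_3$, gives the same bound. Because $(n+4) + (n-2) = 2n+2$ accounts for every root of $h - 1$, choosing $n > \max\{N_2, N_3\}$ yields $|g(\alpha)| < n+2$ uniformly over all critical points, whence $\rho < n+2 = \dim X + 1$, as desired.

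The genuine difficulty has already been absorbed into Propositions \ref{mainprop1} and \ref{mainprop2}, where Rouch\'e's theorem localizes the roots into appropriate annuli and the maximum modulus principle (supported by Lemmas \ref{rangea} and \ref{explemma}) produces the numerical bounds. What remains for the final theorem is essentially bookkeeping: verifying that the two regions partition the entire root set of $h - 1$ and that both threshold conditions can be imposed simultaneously by taking $n$ large.
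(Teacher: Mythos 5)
Your proposal is correct and follows the paper's proof essentially verbatim: invoke Proposition \ref{eigncrit} to identify $\rho$ with $\max_{h(t)=1}|g(t)|$ via the reduction in Section 2.2, then combine Propositions \ref{mainprop1} and \ref{mainprop2}, whose root counts $(n+4)+(n-2)=2n+2$ exhaust all roots of $h-1$, to get $|g(\alpha)|<n+2$ for $n>\max\{N_2,N_3\}$. No gaps; the extra bookkeeping you spell out (the partition of the root set and taking $n$ past both thresholds) is exactly what the paper's shorter proof leaves implicit.
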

\begin{proof}
  Notice $\rho=\max\limits_{h(t)=1} |g(t)|$,
   by Proposition \ref{eigncrit} and the analysis at the beginning of this subsection.  Therefore the statement is a direct consequence of the combination of
   Propositions \ref{mainprop1} and \ref{mainprop2}.
\end{proof}
\begin{remark}
  When   $3\mid n+1$, the curve $x^{n+1}-y^3=0$ is reducible, so  the parameterzation $(x, y)=(t^3, t^{n+1})$ is not sufficient. A more involved analysis similar to that for $\mathbb{P}_{\mathbb{P}^n}(\mathcal{O}\oplus \mathcal{O}(n-1))$ in \cite{GHIKLS} may be needed, in order to remove the assumption $3\nmid n+1$.
\end{remark}

\begin{remark}
    Numerical computations by Mathematica 10.0 show that   $n=16$ is the smallest number such that  Galkin's lower bound conjecture does not hold for $\mathbb{P}_{\mathbb{P}^{n}}(\mathcal{O}\oplus \mathcal{O}(3))$, while
     this conjecture does   hold for  $\mathbb{P}_{\mathbb{P}^{n}}(\mathcal{O}\oplus \mathcal{O}(a))$  with $a\in \{2, 4, 5\}$ and $n< 30$.

    In addition to the examples in \cite{GHIKLS},   $\mathbb{P}_{\mathbb{P}^{16}}(\mathcal{O}\oplus \mathcal{O}(3))$      gives a negative answer to Conjecture $\mathcal{O}$ as well.
  \end{remark}

\end{document}